\newtheorem{prethm}{{\bf Theorem}}[section]
\newenvironment{thm}{\begin{prethm}{\hspace{-0.5
em}{\bf.}}}{\end{prethm}}
\newtheorem{prepro}{{\bf Theorem}}
\newtheorem{precor}[prethm]{{\bf Corollary}}
\newenvironment{cor}{\begin{precor}{\hspace{-0.5
em}{\bf.}}}{\end{precor}}
\newtheorem{preconj}[prethm]{{\bf Conjecture}}
\newenvironment{conj}{\begin{preconj}{\hspace{-0.5
em}{\bf.}}}{\end{preconj}}
\newtheorem{preremark}[prethm]{{\bf Remark}}
\newenvironment{remark}{\begin{preremark}\em{\hspace{-0.5
em}{\bf.}}}{\end{preremark}}
\newtheorem{prelem}[prethm]{{\bf Lemma}}
\newtheorem{preque}[prethm]{{\bf Question}}
\newtheorem{preobserv}[prethm]{{\bf Observation}}
\newtheorem{predef}[prethm]{{\bf Definition}}
\newtheorem{preproposition}[prethm]{{\bf Proposition}}
\newenvironment{proposition}{\begin{preproposition}{\hspace{-0.5
em}{\bf.}}}{\end{preproposition}}
\newtheorem{preproof}{{\bf Proof.}}
\newtheorem{preprooff}{{\bf Proof}}
\newenvironment{proof}[1]{\begin{preproof}{\rm
#1}\hfill{$\Box$}}{\end{preproof}}
\newtheorem{preproofs}{{\bf The second proof of }}
\newtheorem{preprooft}{{\bf Third proof of }}
\newtheorem{preproofF}{{\bf Proof of}}
\title{\bf\Large 
Modulo orientations with bounded out-degrees
}
\author{{\normalsize{\sc Morteza Hasanvand${}$} }\vspace{3mm}
\\{\footnotesize{${}$\it Department of Mathematical
 Sciences, Sharif
University of Technology, Tehran, Iran}}
{\footnotesize{}}\\{\footnotesize{ $\mathsf{morteza.hasanvand@alum.sharif.edu }$ }}}
\date{}
\begin{document}
\maketitle
\begin{abstract}{
Let $G$ be a graph, let $k$ be a positive integer, and let $p:V(G)\rightarrow Z_k$ be a mapping with $|E(G)| \stackrel{k}{\equiv}\sum_{v\in V(G)}p(v) $. In this paper, we show that if $G$ is $(3k-3)$-edge-connected, then it has an orientation such that for each vertex $v$, $|d^+_G(v)-d_G(v)/2| < k$; also if $G$ contains $2k-2$ edge-disjoint spanning trees, then it admits such an orientation but by imposing greater out-degree bounds.
\\
\\
\noindent {\small {\it Keywords}: Modulo orientation; out-degree; edge-connected; partition-connected. }} {\small
}
\end{abstract}
%
%==============================================================================
%
%
%
%
%									 1
%								   111
%									11
%									11
%									11
%									11
%								  1111111
%
%
%
%
%==============================================================================
%
\section{Introduction}
In this article, graphs have no loops, but multiple edges are allowed, and a general graph 
may have loops and multiple edges.
Let $G$ be a graph. The vertex set and the edge set of $G$ are denoted by $V(G)$ and $E(G)$, respectively.
We denote by $d_G(v)$ the degree of a vertex $v$ in the graph $G$, whether $G$ is directed or not.
If $G$ has an orientation, the out-degree and in-degree of $v$ are denoted by $d_G^+(v)$ and $d_G^-(v)$.
For a vertex set $A$ of $G$ with at least two vertices, the number of edges of $G$ with exactly one end in $A$ is denoted by $d_G(A)$.
Also, we denote by $e_G(A)$ the number of edges with both ends in $A$ 
and denote by $d_G(A,B)$ the number of edges with one end in $A$ and one end in $B$, where $B$ is a vertex set.
For notational
simplicity, we write $A^c$ for the vertex set $V(G)\setminus A$.
We denote by $G[A]$ the induced subgraph of $G$ with the vertex set $A$ containing
precisely those edges of $G$ whose ends lie in $A$, and denote by $G[A, B]$ the induced bipartite factor of $G$
with the bipartition $(A, B)$.
The graph
$G$ is said to be {\bf trivial}, if it has no edges.
Let $k$ be a positive integer. 
The cyclic group of order $k$ is denoted by $Z_k$.
For any integer $n$, we denote by $[n]_k^-$ the unique integer $n_0$ such that $n_0\stackrel{k}{\equiv}n$ and 
$n_0\in \{-1,0,\ldots, k-2\}$.
For convenience, we write $[n]_k$ for $[n]^-_k$.
An orientation of $G$ is said to be 
{\bf $p$-orientation}, if for each vertex $v$, $d_G^+(v)\stackrel{k}{\equiv}p(v) $,
 where $p:V(G)\rightarrow Z_k$ is a mapping.
For two rational numbers $x$ and $y$,
 we say that $x\stackrel{k}{\equiv}y$, if $x-y$ is an integer divisible by $k$. 
A graph $G$ is called {\bf $m$-tree-connected}, if it contains $m$ edge-disjoint spanning trees. 
Note that by the result of Nash-Williams~\cite{Nash-Williams-1961} and Tutte~\cite{Tutte-1961} every $2m$-edge-connected graph is $m$-tree-connected.
A graph $G$ is said to be {\bf $(m,l_0)$-partition-connected},
 if it can be decomposed into an $m$-tree-connected factor and a factor $F$ having an orientation such that for each vertex $v$, $d^+_F(v)\ge l_0(v)$,
where $l_0$ is an integer-valued function on $V(G)$. 
A graph is termed {\bf essentially $\lambda$-edge-connected},
 if all edges of any edge cut of size strictly less than $\lambda$ are incident with a common vertex.
A graph $G$ is called {\bf odd-$\lambda$-edge-connected}, if $d_G(A)\ge \lambda$ for every vertex set $A$ with $d_G(A)$ odd.
For a graph $G$ with a given vertex $u$, we denote by $\chi_u$ the mapping $\chi_u:V(G)\rightarrow \{0,1\}$
such that $\chi_u(u)=1$ and $\chi_u(v)=0$ for all vertices $v$ with $v\neq u$.
Two different edges are called {\bf parallel}, if have the same end vertices.
 For two edges $xu$ and $uy$ incident with the vertex $u$, {\bf lifting of $xu$ and $uy$}
 is an operation that removes $xu$ and $uy$ and adds a new edge $xy$
(when the purpose is to generate a loopless graph we must not add the next edge $xy$ when $x=y$).
Also, if one of $xu$ and $yu$, as $xu$, is directed, then we direct $xy$ toward $y$ when $xu$ is
toward $u$, and direct $xy$ away from $y$ when $xu$ is
 away from $u$.
Throughout this article, all variables $m$ are nonnegative integers and all variables $k$ are positive integers.
%

%==============================================================================
In 2012 Thomassen constructed the following theorem about the existence of modulo orientations in highly edge-connected graphs.
\begin{thm}{\rm(\cite{MR2885433})}\label{Intro:orientation:thm:Thomassen}
{Let $G$ be a graph, let $k$ be an integer, $k\ge 3$, and let $p:V(G)\rightarrow Z_k$ be a mapping with
$|E(G)| \stackrel{k}{\equiv} \sum_{v\in V(G)}p(v)$.
 If $G$ is $(2k^2+k)$-edge-connected, then
it has a $p$-orientation.
}\end{thm}

Later, Lov{\'a}sz, Thomassen, Wu, and Zhang (2013) refined 
Theorem~\ref{Intro:orientation:thm:Thomassen} for odd integers $k$ by reducing the quadratic bound $(2k^2 + k)$ down to a linear bound as the following theorem. They also remarked that this number can be reduced to $3k-2$ for even integers $k$.
\begin{thm}{\rm(\cite{Lovasz-Thomassen-Wu-Zhang-2013})}\label{Into:thm:3k-2}
{Let $G$ be a graph, let $k$ be an odd integer, $k\ge 3$, and let $p:V(G)\rightarrow Z_k$ be a mapping with
$|E(G)| \stackrel{k}{\equiv} \sum_{v\in V(G)}p(v)$.
If $G$ is $(3k-3)$-edge-connected, then $G$ has a $p$-orientation.
In particular, the needed edge-connectivity can be replaced by odd-edge-connectivity $3k-2$, when $2p(v)\stackrel{k}{\equiv}d_G(v)$ for all vertices~$v$.
}\end{thm}

In this paper, we refine Theorem~\ref{Into:thm:3k-2} by pushing the required edge-connectivity down to $3k-3$, even for even numbers $k$, and strengthen it by giving a sharp bound on out-degrees as mentioned in the abstract. In addition, we improve the needed edge-connectivity for Eulerian graphs for the following special case.
\begin{thm}
{Let $k$ be an even positive integer, let $G$ be an Eulerian graph, and let $Q\subseteq V(G)$ with $|Q|$ even.
If $d_G(X)\ge 3k-2$ for every $X\subseteq V(G)$ with $|X\cap Q|$ odd,
then $G$ admits an orientation such that for each $v\in V(G)\setminus Q$, $d_G^+(v)=d_G^-(v)$, and 
for each $v\in Q$, $|d_G^+(v)-d_G^-(v)|=k$.
}\end{thm}

Recently, Thomassen (2020) applied Theorems~\ref{Intro:orientation:thm:Thomassen}
 and~\ref{Into:thm:3k-2} to establish the following elegant result about the existence of regular factorizations of edge-connected regular graphs. As an application, we applied these new improvements in~\cite{Equitable} to refine Thomassen's result.
\begin{thm}{\rm (\cite{Thomassen-2020})}
{Let $r$ be a natural number, $r\ge 4$ and let $G$ be an $r$-regular graph. Let $r = kq$ be 
a natural number where $q \ge 3$ is odd. If $k$ is odd and $G$ has odd-edge-connectivity at least 
$3k - 2$, then $G$ can be edge-decomposed into $q$-factors. If $k$ is even and $G$ has an even 
number of vertices and edge-connectivity at least $k^2 +2k$, then $G$ can be edge-decomposed 
into $q$-factors.
}\end{thm}

In Section~\ref{sec:partition-connected}, we investigate modulo orientations with bounded out-degrees in partition-connected graphs and strengthen some recent results in~\cite{Han-Lai-Li-2018, ManyEdgeDisjoint} toward this concept. 
As a consequence, we prove the following theorem.
\begin{thm}
{Let $G$ be a graph with $z\in V(G)$, let $k$ be an integer, $k\ge 3$, and let $p:V(G)\rightarrow Z_k$ be a mapping with
$|E(G)| \stackrel{k}{\equiv} \sum_{v\in V(G)}p(v)$.
If $G$ is $(2k-2)$-tree-connected, then it has a $p$-orientation such that for each $v\in V(G)\setminus \{z\}$, 
$$k-1\le d^+_G(v) \le \max\{2k-2, d_G(v)-(k-1)\}.$$
}\end{thm}
%
%
%==============================================================================
%
%
%
%
%
%
%
%
%
%
%
%
%==============================================================================
%
\section{Orientations modulo $2$}
In this section, we consider the existence of parity orientations. 
Our results are based on the following theorem which is a special case of a result due 
to Frank, Tardos, and Seb{\H{o}} (1984) who gave a criterion for the existence of parity factors with bounded degrees.
\begin{thm}{\rm(see Theorem 6 in~\cite{Frank-Tardos-Sebo-1984})}\label{thm:FTS:parity}
{Let $G$ be a connected graph and let $p$ and $q$ be two integer-valued functions on $V(G)$ with $p\le q$ satisfying $|E(G)|\stackrel{2}{\equiv}\sum_{v\in V(G)}p(v)$ and $p(v)\stackrel{2}{\equiv}q(v)$ for each vertex $v$.
Then $G$ has a $p$-orientation modulo $2$ such that for each vertex $v$, $p(v) \le d^+_G(v)\le q(v)$,
if for any two disjoint subsets $A$ and $B$ of $V(G)$ with $A\cup B\neq \emptyset$,
$$\omega(G\setminus (A\cup B))\le 1+ \sum_{v\in A}q(v)-e_G(A)+ \sum_{v\in B}(d_{G}(v)-p(v))-e_G(B).$$
}\end{thm}
\begin{cor}{\rm(see Theorem 4 in~\cite{Frank-Tardos-Sebo-1984})}\label{Orientation:modulo2:cor}
{Let $G$ be a graph and let $p:V(G)\rightarrow Z_2$ be a mapping with
$|E(G)|\stackrel{2}{\equiv}\sum_{v\in V(G)}p(v)$.
 If $G$ is connected, then $G$ has a $p$-orientation.
}\end{cor}
%
%==============================================================================
%
%
%
%
%
%==============================================================================
%
\subsection{$2$-edge-connected graphs}
As we have stated above, edge-connectedness $1$ is sufficient for a graph to have a $p$-orientation modulo~$2$.
Here, we show that edge-connectedness $2$ is sufficient for a graph to have a $p$-orientation modulo~$2$
 in which out-degrees fall in predetermined short intervals.
\begin{thm}\label{Orientation:modulo2:thm}
{Let $G$ be a graph and let $p:V(G)\rightarrow Z_2$ be a mapping with
$|E(G)|\stackrel{2}{\equiv}\sum_{v\in V(G)}p(v)$.
 If $G$ is $2$-edge-connected, then it has a $p$-orientation such that for each vertex $v$, 
$$ \lfloor \frac{d_G(v)}{2}\rfloor-1
\le d_G^+(v)\le 
\lceil \frac{d_G(v)}{2}\rceil +1.$$
Furthermore, for an arbitrary vertex $z$, $d^+_{G}(z)$ 
can be assigned to any plausible integer value in whose interval.
}\end{thm}
\begin{proof}
{For each vertex $v$, define $p'(v)\in \{\lfloor d_{G}(v)/2\rfloor-1, \lfloor d_{G}(v)/2\rfloor\}$ and
 $q'(v)\in \{ \lceil d_{G}(v)/2\rceil, \lceil d_{G}(v)/2\rceil+1\}$ 
 such that 
$p'(v)\stackrel{2}{\equiv}p(v)\stackrel{2}{\equiv}q'(v)$. 
Obviously, $p'(v)\le q'(v)$. 
Note that if $p'(z)<q'(z)$, we permit to replace $p'(z)$ by $p'(z)+2$ or 
 replace $q'(z)$ by $q'(z)-2$ with respect to our purpose related to $z$.
For the first option, we have $p'(z)\le d_G(z)/2+3/2$ and for the second option, we have $d_G(z)/2\le q'(z)+3/2$.
Let $A$ and $B$ be two disjoint subsets of $V(G)$ with $A\cup B\neq \emptyset$. Since $G$ is $2$-edge-connected, it is easy to see that
$$\omega(G\setminus (A\cup B)) \le \sum_{A\cup B}\frac{1}{2}d_G(v)-e_G(A\cup B),$$
which implies that 
$$\omega(G\setminus (A\cup B))\le 3/2+\sum_{v\in A}q'(v)-e_G(A)+\sum_{v\in B}(d_G(v)-p'(v)) -e_G(B).$$
Thus by Theorem~\ref{thm:FTS:parity}, the graph $G$ has a $p$-orientation such that for each vertex $v$, $p'(v) \le d^+_G(v)\le q'(v)$, and the proof is completed.
}\end{proof}
We shall here introduce an alternative proof using an induction.
\begin{proof}
{By induction on the sum of all $d_G(v)-3$ taken over all vertices $v$ with $d_G(v)\ge 4$. 
First, assume that for each vertex $v$, $d_G(v)\le 3$. 
For $|V(G)|=1$, the proof is clear. 
So, suppose $|V(G)|\ge 2$ which implies that $G$ has no loops.
It is not hard to check that there is an edge set $E$ incident with $z$ such that $G-E$ is connected, 
where $|E|=1$ when $z$ has even degree and $|E|=2$ when $z$ has odd degree.
Orient the edge(s) of $E$ toward $z$, if the desired condition on out-degree of $z$ is $d^+_G(z)\le\lfloor d_G(z)/2\rfloor$,
and orient the edge(s) of $E$ away from $z$
 if the desired condition on out-degree of $z$ is $ d^+_G(z)\ge \lceil d_G(z)/2\rceil $.
By applying Corollary~\ref{Orientation:modulo2:cor} to the graph $G-E$, the pre-orientation of $E$ can be extended
 to a $p$-orientation of $G$ 
 satisfying 
the theorem.
Now, assume that for a vertex $u$, $d_G(u) \ge 4$. 
By Fleischner's splitting lemma,
there are two edges $xu$ and $yu$ of $G$ incident with $u$ such that by lifting them the resulting general graph $H$ is still $2$-edge-connected (possibly $x=u$ or $y=u$).
Note that for each vertex $v$, $d_H(v)=d_G(v)-2\chi_u$.
Define $p'=p-\chi_{u}$ so that 
$|E(H)|\stackrel{2}{\equiv}\sum_{v\in V(H)}p'(v)$.
By the induction hypothesis, $H$ admits a $p'$-orientation such that for each vertex $v$,
$ \lfloor d_H(v)/2\rfloor-1
\le d_H^+(v)\le 
\lceil d_H(v)/2\rceil +1$.
This orientation of $H$ induces an orientation for $G$ such that for each vertex $v$, $d_G^+(v)=d^+_H(v)+\chi_u$.
This orientation of $G$ is a $p$-orientation
 satisfying
 the desired properties.
The extra condition on $d^+_{G}(z)$ can be obtained by giving an appropriate condition on $d^+_{H}(z)$. 
Hence the theorem holds.
}\end{proof}
%
%
%
%==============================================================================
%
%
%
%
%
%
%==============================================================================
%
%
\subsection{$(1,l_0)$-partition-connected graphs}
In the following theorem, 
we develop Theorem~\ref{Orientation:modulo2:thm} to a partition-connected version.
\begin{thm}\label{mixed:modulo2:thm:2:partition}
{Let $G$ be a graph and let $p:V(G)\rightarrow Z_2$ be a mapping with $|E(G)| \stackrel{2}{\equiv} \sum_{v\in V(G)}p(v) $. 
Let $s$, $s_0$, and $l_0$ be three integer-valued functions on $V(G)$ satisfying $s+s_0< d_G$ and
$\max\{s,s_0\}\le l_0$. 
If $G$ is $(1, l_0)$-partition-connected, then it has a $p$-orientation such that for each vertex~$v$,
$$s(v)\le d_{G}^+(v)\le d_G(v)-s_0(v).$$
}\end{thm}
\begin{proof}
{For each vertex $v$, define $p'(v)\in \{s(v), s(v)+1\}$ and $q'(v)\in \{d_G(v)-s_0(v)-1, d_G(v)-s_0(v)\}$ such that 
$p'(v)\stackrel{2}{\equiv}p(v)\stackrel{2}{\equiv}q'(v)$. 
Note that the condition $s(v)+s_0(v)< d_G(v)$ implies that $p'(v)\le q'(v)+1$ and hence $p'(v)\le q'(v)$. 
By the assumption, the graph $G$ can be decomposed into two factors $T$ and $F$ such that $T$ is a spanning tree and $F$ admits an orientation such that for each vertex $v$, $d^+_F(v)\ge l_0(v)$.
Let $A$ and $B$ be two disjoint subsets of $V(G)$. Since $T$ is connected, it is easy to see that
$$\omega(G\setminus (A\cup B)) \le \omega(T\setminus (A\cup B))\le \sum_{A\cup B}(d_T(v)-1)+1-e_T(A\cup B).$$
Moreover, since $d^+_F(v)\ge l_0(v)\ge \max \{s(v),s_0(v)\}$ for each vertex $v$, we must have 
$$0\le \sum_{v\in A\cup B}d^-_F(v)-e_F(A\cup B)\le 
\sum_{v\in A}(d_F(v)-s_0(v))+\sum_{v\in B}(d_F(v)-s(v))-e_F(A\cup B).$$
 Therefore,
$$\omega(G\setminus (A\cup B))\le \sum_{v\in A}(d_G(v)-s_0(v)-1)+
\sum_{v\in B}(d_G(v)-s(v)-1) -e_{G}(A\cup B)+1,$$
which implies that 
$$\omega(G\setminus (A\cup B))\le 1+\sum_{v\in A}q'(v)-e_{G}(A)+
\sum_{v\in B}(d_G(v)-p'(v)) -e_{G}(B).$$
Thus by Theorem~\ref{thm:FTS:parity}, the graph $G$ has a $p$-orientation such that for each vertex $v$, $p'(v) \le d^+_G(v)\le q'(v)$, and the proof is completed.
}\end{proof}
%
%==============================================================================
%
%
%
%
%
%
%
%
%
%
%
%
%==============================================================================
%
\section{Orientations modulo $k$: edge-connected graphs}
In this section, we are going to improve Theorem~\ref{Into:thm:3k-2} by giving a sharp bound on out-degrees 
and provide a common version for odd and even integers $k$.
We follow with the same innovative ideas that appeared in~\cite{Lovasz-Thomassen-Wu-Zhang-2013}
and retain the same arguments, while modifications are inserted.
The proof is based on defining a set function $\alpha$ whose values lie in the set $\{0,\pm 1/2,\ldots,\pm k/2\}$. It
 is inspired by the set function $\tau(A)$ in~\cite{Lovasz-Thomassen-Wu-Zhang-2013} and the set function $t(A)$ in~\cite{MR2885433}.
More precisely, for odd integers $k$, $2\alpha(A)=\tau(A)$, and for odd and even integers $k$, $2|\alpha(A)|=t(A)$.
\subsection{Definition and properties of set functions $\alpha$}

Let $G$ be a
%loopless 
graph, let $k$ be a positive integer, and let $p:V(G)\rightarrow Z_k$ be a mapping.
For each vertex $v$, take $\alpha(v)$ to be a rational number such that
$\alpha(v)\in \{0,\pm 1/2,\ldots,\pm k/2\}$ and
$\alpha(v)\stackrel{k}{\equiv}p(v)-d_G(v)/2$.
In intuitive terms, $|\alpha(v)|$ specifies the distance between two points $p(v)$ and $d_G(v)/2$ on a circle whose circumference is $k$,
 and the sign of $\alpha(v)$ determines the position of $p(v)$ with respect to $d_G(v)/2$.
Thus, it is intuitively clear and not difficult to show that $\alpha(v)$ is unique unless $\alpha(v)\in \{-k/2,k/2\}$.
For any vertex set $A$, take $\alpha(A)$ to be a rational number such that
$\alpha(A)\in \{0,\pm 1/2,\ldots,\pm k/2\}$ and 
$\alpha(A)\stackrel{k}{\equiv}p(A)-d_G(A)/2,$
where $p(A)=\sum_{v\in A} p(v)\,-e_G(A)$ and $d_G(A)=\sum_{v\in A} d_G(v)\, -2e_G(A)$.
When $G$ and $p$ are not clear from the context, 
we denote by $\alpha_G(v,p)$ and $\alpha_G(A,p)$ the value of $\alpha(v)$ and $\alpha(A)$.
Now, we present some basic properties of $\alpha$ in the following propositions.
\begin{proposition}\label{Orientation:3k-3:proposition}
{Let $G$ be a graph and let $p:V(G)\rightarrow Z_k$ be a mapping with
$|E(G)|\stackrel{k}{\equiv}\sum_{v\in V(G)}p(v)$.
For any two vertex sets $A$ and $B$, the following statements hold:
\begin{enumerate}{
\item \label{con:1} 
If $\alpha(A)\stackrel{k}{\equiv}\pm \alpha(B)$, then $|\alpha(A)|=|\alpha(B)|$.
\item \label{con:2} 
If $A\cap B=\emptyset$, then
$\alpha(A\cup B)\stackrel{k}{\equiv}\alpha(A)+\alpha(B).$
\item \label{con:3} 
$|\alpha(A)|=|\alpha(A^c)|.$
\item
\label{con:4} 
If $\alpha(v_0)=0$ for a vertex $v_0$ with $v_0\in V(G)\setminus A$, then
$|\alpha(A)|=|\alpha(A\cup \{v_0\})|.$
\item \label{con:5} 
If $d_G(A)\ge 3k-3$, then $d_G(A)\ge (2k-2)+2|\alpha(A)|$.
\item \label{con:6}  
$d_G(A)-2|\alpha(A)|$ is an even integer.
}\end{enumerate}
}\end{proposition}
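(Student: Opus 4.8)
The plan is to prove the six items in a logical rather than the listed order, since items~\ref{con:3} and~\ref{con:4} are formal consequences of items~\ref{con:1} and~\ref{con:2}, while item~\ref{con:5} rests on item~\ref{con:6}. Two elementary observations carry most of the weight. The first concerns the codomain $S=\{0,\pm1/2,\ldots,\pm k/2\}$: if $x,y\in S$ and $x\stackrel{k}{\equiv}y$, then $x-y$ is an integer divisible by $k$ with $|x-y|\le k$, hence $x-y\in\{-k,0,k\}$; the values $\pm k$ force $\{x,y\}=\{k/2,-k/2\}$ since $x\le k/2$ and $y\ge -k/2$, so in every case $|x|=|y|$, and the relation $x\stackrel{k}{\equiv}-y$ reduces to this after replacing $y$ by $-y$. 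This is exactly item~\ref{con:1}. The second observation is item~\ref{con:6}: the defining congruence $\alpha(A)\stackrel{k}{\equiv}p(A)-d_G(A)/2$ means $\alpha(A)-p(A)+d_G(A)/2=mk$ for some integer $m$, so $d_G(A)+2\alpha(A)=2p(A)+2mk$ is even, and subtracting the even integer $4\alpha(A)$ shows $d_G(A)-2\alpha(A)$ is even too; since $2|\alpha(A)|$ equals $2\alpha(A)$ or $-2\alpha(A)$, the integer $d_G(A)-2|\alpha(A)|$ is even.

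Next I would establish item~\ref{con:2} by a direct inclusion-exclusion count: for disjoint $A$ and $B$ one has $e_G(A\cup B)=e_G(A)+e_G(B)+e_G(A,B)$, hence $p(A\cup B)=p(A)+p(B)-e_G(A,B)$ and $d_G(A\cup B)=d_G(A)+d_G(B)-2e_G(A,B)$; the two copies of $e_G(A,B)$ then cancel in $p(A\cup B)-d_G(A\cup B)/2$, which therefore equals $\big(p(A)-d_G(A)/2\big)+\big(p(B)-d_G(B)/2\big)$, and reading this modulo $k$ yields $\alpha(A\cup B)\stackrel{k}{\equiv}\alpha(A)+\alpha(B)$. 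Item~\ref{con:3} then follows by taking $B=A^c$: from $|E(G)|\stackrel{k}{\equiv}\sum_{v}p(v)$ one computes $\alpha(V(G))\stackrel{k}{\equiv}0$, hence $\alpha(V(G))=0$ because $0$ is the only element of $S$ divisible by $k$; thus $\alpha(A)\stackrel{k}{\equiv}-\alpha(A^c)$ and item~\ref{con:1} gives $|\alpha(A)|=|\alpha(A^c)|$. Item~\ref{con:4} is the same argument with $B=\{v_0\}$: since $G$ has no loops, $e_G(\{v_0\})=0$, so $\alpha(\{v_0\})\stackrel{k}{\equiv}\alpha(v_0)=0$, whence $\alpha(A\cup\{v_0\})\stackrel{k}{\equiv}\alpha(A)$ and item~\ref{con:1} applies once more.

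Finally I would deduce item~\ref{con:5} from item~\ref{con:6}. Set $t=2|\alpha(A)|$, an integer in $\{0,1,\ldots,k\}$, and recall from item~\ref{con:6} that $d_G(A)\stackrel{2}{\equiv} t$. If $t\le k-1$, then $(2k-2)+t\le 3k-3\le d_G(A)$, which is what we want. If $t=k$, then $d_G(A)\stackrel{2}{\equiv}k$ while $3k-3\stackrel{2}{\equiv}k-1$, so $d_G(A)\neq 3k-3$, forcing $d_G(A)\ge 3k-2=(2k-2)+t$. I do not expect a genuine obstacle anywhere in this; the one place that needs care is the parity bookkeeping in item~\ref{con:5} --- specifically the edge case $|\alpha(A)|=k/2$ --- which is precisely why item~\ref{con:6} must be in hand first, together with the running reminder that $2\alpha(\cdot)$ is always an integer even when $\alpha(\cdot)$ itself is not.
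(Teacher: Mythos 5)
Your proposal is correct and follows essentially the same route as the paper: item (2) by the same inclusion–exclusion cancellation of $e_G(A,B)$, items (3) and (4) by reducing to (1) via $\alpha(V(G))=0$ and $\alpha(v_0)=0$, item (6) from the integrality of $\alpha(A)+d_G(A)/2$, and item (5) by the same parity argument ruling out $2|\alpha(A)|=k$ when $d_G(A)=3k-3$. The only differences are cosmetic (logical reordering and a more explicit treatment of the case analysis in (1)), so nothing further is needed.
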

\begin{proof}
{To obtain (\ref{con:1}), one can conclude that $|\alpha(A)\mp \alpha(B)|\in \{0,k\}$ which implies that $|\alpha(A)|=|\alpha(B)|$.
To prove (\ref{con:2}), it suffices to check that
$\alpha(A)+\alpha(B) \stackrel{k}{\equiv} (p(A)-d_G(A)/2)+(p(B)-d_G(B)/2)
 \stackrel{k}{\equiv} \big(p(A)+p(B)-d_G(A,B)\big) -\big(d_G(A) +d_G(B)-2d_G(A, B)\big)/2$
which implies that
$ \alpha(A)+\alpha(B) \stackrel{k}{\equiv} p(A\cup B) -d_G(A\cup B)/2
 \stackrel{k}{\equiv} \alpha(A \cup B).$
%
%where $e_G(A,B)$ denotes the number of edges $ab$ with $a\in A$ and $b$. 
Moreover, $\alpha\big(V(G)\big)=0$, since 
$p\big(V(G)\big) \stackrel{k}{\equiv}\sum_{v\in V(G)}p(v)\; - e_{G}(V(G)) \stackrel{k}{\equiv}\sum_{v\in V(G)}p(v)\;- |E(G)| \stackrel{k}{\equiv}0.$
Hence 
$\alpha(A)+\alpha (A^c)\stackrel{k}{\equiv} 0$
 and 
$|\alpha(A)|=|\alpha (A^c)|$ which establishes (\ref{con:3}). 
The proof of~(\ref{con:4}) can be obtained from
$\alpha(A) \stackrel{k}{\equiv} \alpha(A)+ \alpha(v_0)\stackrel{k}{\equiv}\alpha(A\cup \{v_0\})$.
Since $\alpha(A)+d_G(A)/2$ is an integer, 
$2\alpha(A)+d_G(A)$ is even and so $d_G(A)-2|\alpha(A)|$ is even which implies (\ref{con:6}).
Note that $|\alpha(A)|\le k/2$.
If $|\alpha(A)|=k/2$, then $d_G(A)$ and $k$ have the same parity.
Since $3k-3$ and $k$ have different parity, we have $|\alpha(A)|<k/2$, when $d_G(A)=3k-3$.
 This can complete the proof.
}\end{proof}
\begin{proposition}
{Let $G$ be a graph and let $p:V(G)\rightarrow Z_k$ be a mapping with
$|E(G)|\stackrel{k}{\equiv}\sum_{v\in V(G)}p(v)$.
If $G'$ is a graph obtained from $G$ by lifting two edges $xu$ and $yu$, then for every vertex set $A$ we have $|\alpha'(A)|=|\alpha(A)|$, where 
$$p' =
 \begin{cases}
p-\chi_u-\chi_x, &\text{if $xu$ and $yu$ are parallel};\\
p-\chi_u, &\text{if $xu$ and $yu$ are not parallel}.
 \end{cases}$$
}\end{proposition}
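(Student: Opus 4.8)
The plan is to show that the per-vertex quantity $p(v)-d_G(v)/2$ is left unchanged modulo $k$ by the lifting operation together with the prescribed modification of $p$, and then to invoke the uniqueness-up-to-sign of $\alpha$. First I would record the exact identity $p(A)-d_G(A)/2=\sum_{v\in A}\big(p(v)-d_G(v)/2\big)$, which follows immediately from the definitions $p(A)=\sum_{v\in A}p(v)-e_G(A)$ and $d_G(A)=\sum_{v\in A}d_G(v)-2e_G(A)$ once the two $e_G(A)$ terms cancel; hence $\alpha(A)\stackrel{k}{\equiv}\sum_{v\in A}\big(p(v)-d_G(v)/2\big)$, and likewise $\alpha'(A)\stackrel{k}{\equiv}\sum_{v\in A}\big(p'(v)-d_{G'}(v)/2\big)$. (One should first note that $p'$ is again compatible with $G'$, i.e. $|E(G')|\stackrel{k}{\equiv}\sum_{v}p'(v)$: lifting deletes exactly one edge and $\chi_u$ is subtracted once when $xu,yu$ are not parallel, while it deletes exactly two edges and $\chi_u+\chi_x$ is subtracted when they are parallel.)

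Next I would split into the two cases and compare degrees and $p$-values vertex by vertex. When $xu$ and $yu$ are not parallel the three vertices $x,y,u$ are pairwise distinct (no loops are allowed, and two edges at $u$ with a common other endpoint would be parallel), and lifting lowers $d_G(u)$ by $2$ while leaving every other degree untouched; since $p'=p-\chi_u$, we get $p'(v)-d_{G'}(v)/2=p(v)-d_G(v)/2$ for every vertex $v$. When $xu$ and $yu$ are parallel we have $x=y\neq u$, lifting lowers both $d_G(u)$ and $d_G(x)$ by $2$, and $p'=p-\chi_u-\chi_x$ compensates exactly, so again $p'(v)-d_{G'}(v)/2=p(v)-d_G(v)/2$ for all $v$. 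In either case, summing over $A$ gives $\alpha'(A)\stackrel{k}{\equiv}\alpha(A)$.

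Finally, since $\alpha(A)$ and $\alpha'(A)$ both lie in $\{0,\pm1/2,\ldots,\pm k/2\}$ and are congruent modulo $k$, their difference is $0$ or $\pm k$, and in the second case $\{\alpha(A),\alpha'(A)\}=\{k/2,-k/2\}$; either way $|\alpha'(A)|=|\alpha(A)|$, which is the same argument used for Proposition~\ref{Orientation:3k-3:proposition}(\ref{con:1}). I do not anticipate a genuine obstacle: the content is entirely in the two-case bookkeeping and in observing that subtracting $\chi_u$ (or $\chi_u+\chi_x$) is precisely the correction needed to offset the drop in the relevant degree(s); the only subtlety is the harmless non-uniqueness of $\alpha$ at the extreme values $\pm k/2$, dealt with in the last step.
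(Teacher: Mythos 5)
Your proof is correct, and since the paper states this proposition without any proof, your argument supplies exactly the routine verification that was evidently intended: the cancellation of the $e_G(A)$ terms reduces everything to the per-vertex quantity $p(v)-d_G(v)/2$, which the lifting together with the prescribed change of $p$ leaves invariant, and the final step is the same congruence-to-equality argument as in Proposition~\ref{Orientation:3k-3:proposition}(\ref{con:1}). No gaps; the case analysis (non-parallel forces $x\neq y$ and only $d(u)$ drops; parallel forces $x=y$ and both $d(u),d(x)$ drop) is exactly the needed bookkeeping.
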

%
%
%
%
%
%====================================
%

\subsection{Graphs with edge-connectivity at least $3k-3$}
Now, we are ready to refine the main result in~\cite{Lovasz-Thomassen-Wu-Zhang-2013}. 
\begin{thm}\label{Orientation:3k-3:thm:alpha}
{Let $G$ be a
graph with $z_0\in V(G)$, let $k$ be an integer, $k\ge 3$, and let $p:V(G)\rightarrow Z_k$ be a mapping with
$|E(G)|\stackrel{k}{\equiv} \sum_{v\in V(G)}p(v)$. Let $D_{z_0}$ 
be a pre-orientation 
of $E(z_0)$ that is the set of edges incident with $z_0$.
Let $V_0=\{v\in V(G)-z_0: \alpha(v)=0\}$.
 If $V_0\neq \emptyset$, we let $v_0$ be a vertex of $V_0$ with smallest degree. Assume that 
\begin{enumerate}{
%\item [{\upshape$(i)$}]  %1
%$|V(G)|\ge 2$.

\item [{\upshape$(i)$}]  %1
$d_G(z_0)\le 2k-2+2|\alpha(z_0)|$, and the edges incident with $z_0$ are pre-directed such that $d^+_G(z_0)\stackrel{k}{\equiv}p(z_0)$.

\item [{\upshape$(ii)$}]  %1
$d_G(A)\ge 2k-2+2|\alpha(A)|$, for any vertex set $A$ with $\emptyset \subsetneq A \subsetneq V(G)\setminus \{z_0\}$ and $A\neq \{v_0\}$.
}\end{enumerate}

Then the pre-orientation $D_{z_0}$ can be extended to a $p$-orientation $D$ of $G$ such that for each vertex~$v$,
$$|d^+_G(v)-d_G(v)/2| \le k-1+|\alpha(v)|.
$$
}\end{thm}
\begin{proof}
{The proof is by contradiction. We assume (reductio ad absurdum) that $(G, p, z_0)$ is a counterexample
so that $|V(G)|\ge 3$. 
That is, the
graph $G$ with mapping 
$p$ satisfies
 the conditions of the theorem but some
pre-orientation $D_{z_0}$ cannot be extended to a $p$-orientation of $G$
 with the desired properties.
Let $\mathcal{M}$ be the collection of counterexamples $(G, p, z_0)$ such that $|V (G)|+|E(G - z_0)|$ is minimum.
The proof is divided into two parts.
The first part, Claims 1-5 below, establishes some properties
of all members of $\mathcal{M}$.
 In the second part we choose a member $(G, p, z_0)$ of $\mathcal{M}$ such that $|E(G)|$ is
minimum and prove that it is not a counterexample, yielding a contradiction.
If we work with distinct
 graphs $G$, $G'$,
we use the terms $p(A)$ and $\alpha (A)$ when $A$ is a vertex
set of $G$, and $p'(A)$ and $\alpha'(A)$ when $A$ is a vertex set of~$G'$.
\vspace{5mm}
\\
%--------------------------------------------------------------------
{\bf Part I. Some properties of $\mathcal{M}$.}

In Part 1 we let $(G,p, z_0)$ be any member of $\mathcal{M}$.
\vspace{3mm}
\\
{\bf Claim 1. For every vertex set $A\subsetneq V(G)\setminus \{z_0\}$ with $|A|\ge 2$, we have $d_G(A)\ge 2k+2|\alpha(A)|$.}

If $d_G(A)< 2k+2|\alpha (A)|$, then we first get an extension of $D_{z_0}$ to the contracted graph $H = G/A$ by the
minimality property of $G$, since $|V (H)| < |V (G)|$ and $|E(H -z_0)| \le |E(G - z_0)|$. 
Then all edges of the
edge-cut $[A, A^c]$ are oriented in this extension, where $A^c = V (G)\setminus A$ and $[A, A^c]$ is the set of edges with exactly one end in $A$.
Similarly we then contract $A^c$
into a single vertex as a new $z_0$, and again, we use the minimality of $G$ to extend the orientation of
$[A, A^c]$ to the edges of $G$ with both ends in $A$. $\square$
\vspace{3mm}
\\
{\bf Claim 2. $V_0= \emptyset$.}

Suppose $V_0\neq \emptyset$ and $v_0$ is a vertex of $V_0$ with smallest degree.
We can assume that $d_G(v_0) \ge 2$. 
Otherwise $v_0$ is an isolated vertex and we can remove it and use the minimality of $G$.
If $v_0$ has at least two neighbours, we lift one pair of edges incident with $v_0$
 which are not parallel. 
Claim 1 implies that
the resulting graph $G'$ with the modified mapping $p'=p-\chi_{v_0}$
 satisfies the hypotheses of the theorem. Since 
$|E(G'- z_0)| < |E(G - z_0)|$, it holds
that $G'$ has the desired orientation, and 
so
 does $G$, a contradiction.

Now suppose $v_0$ has only one neighbour $x$.
We must have $x \neq z_0$. Otherwise, 
$$|\alpha(W )| =|\alpha (\{z_0, v_0\})| =|\alpha (z_0)|,$$
where 
$W = V (G) \setminus \{z_0,v_0\}$,
 and
then 
$$d_G(z_0) = d_G(W ) + d_G(v_0)\ge (2k - 2) +2|\alpha (W )|+ 2 = 2k +2|\alpha (z_0)|.$$
a contradiction to 
condition (i). 
If $|V (G)|= 3$, then we extend $D_{z_0}$ to an orientation of $G$ by
orienting half of the edges between $x$ and $v_0$ toward $v_0$ and the other half away from $v_0$, yielding a
contradiction.
For the case $|V (G)|> 3$, we have
$$d_G(x) = d_G(\{x,v_0\} ) + d_G(v_0)\ge (2k - 2) +2|\alpha (\{x,v_0\} )|+ 2 = 2k +2|\alpha (x)|.$$
Then, we lift one pair of edges incident with $v_0$ and $x$ which are parallel.
Claim~1 implies that
the resulting graph $G'$ with the modified mapping $p'=p-\chi_{x}-\chi_{v_0}$ 
 satisfies the hypotheses of the theorem. 
Since 
$|E(G'- z_0)| < |E(G - z_0)|$, it holds
that $G'$ has the desired orientation, and 
so
 does $G$, again a contradiction. 
$\square$
%--------------------------------------------------------------------
%
\vspace{3mm}
\\
{\bf Claim 3. $G-z_0$ is connected, and $d_G(z_0) \ge k$.}

Suppose $G - z_0$ is disconnected and let $U$ and $W$ be two components of $G - z_0$. 
By 
condition~(ii)
and Claim 2, we have $d_G(U) \ge 2k - 2$ and $d_G(W ) \ge 2k - 2$. Then
$$d_G(z_0) \ge d_G(U) + d_G(W )>(2k - 2) + k \ge (2k - 2) +2|\alpha(z_0)|,$$
a contradiction to
 condition~(i).

Suppose $d_G(z_0) \le k - 1$ and let $G'$
 be the graph constructed from $G$ by replacing an edge $xy$ of
$G - z_0$ with a directed path of length two through $z_0$ 
 with $p' = p+\chi_{z_0}$.
We have $d_{G'}(z_0) \le (k - 1) + 2 \le 2k - 2 \le (2k - 2) +2|\alpha' (z_0)|$ and hence $G'$
 satisfies 
condition (i). 
For any vertex set $A$ described in
condition (ii), 
$d_{G'}(A) = d_G(A) + 2$, if $A$ contains both $x$ and $y$, and $d_{G'}(A) = d_G(A)$ otherwise. 
So condition (ii) is clearly satisfied.
 Since $|V (G')|=|V (G)|$ and $|E(G'- z_0)| < |E(G - z_0)|$, this implies (by the definition of $\mathcal{M}$) that an extension of $D_{z_0}$ exists in $G'$. 
This orientation results in an orientation of $G$,
a contradiction.
$\square$
%--------------------------------------------------------------------
%
\vspace{3mm}
\\
{\bf Claim 3.A.
For each vertex $v\in V(G) - z_0$, $d_G(v)= 2k-2+2|\alpha(v)|$.}

Suppose otherwise that $d_G(x)\ge 2k+2|\alpha(x)|$ for a vertex $x$ with $x\neq z_0$.
First, assume that $x$ has at least two neighbours. 
Then we lift one pair of edges incident with $x$
which are not parallel.
Claim~1 implies that
the resulting graph $G'$ with the modified mapping $p'=p-\chi_{x}$ 
 satisfies the hypotheses of the theorem. 
Since 
$|E(G'- z_0)| < |E(G - z_0)|$, it holds
that $G'$ has the desired orientation, and 
so does $G$, a contradiction.
Next, assume that $x$ has only one neighbour $y$.
By Claim 3, we must have $y\neq z_0$ and so
$$d_G(y)\ge 
 \begin{cases}
d_G(x)+d_G(z_0)\ge 2k+k\ge\; 2k+2|\alpha(y)|, &\text{if $|V(G)|=3$};\\
d_G(x)+d_G( \{x,y\})\ge 2k+2k-2\ge\; 2k+2|\alpha(y)|, &\text{if $|V(G)|>3$}.
 \end{cases}$$

Then, we lift one pair of edges incident with $x$ and $y$
which are parallel.
Claim~1 implies that
the resulting graph $G'$ with the modified mapping $p'=p-\chi_{x}-\chi_y$ 
 satisfies the hypotheses of the theorem. 
Since 
$|E(G'- z_0)| < |E(G - z_0)|$, it holds
that $G'$ has the desired orientation, and 
so
 does $G$, again a contradiction. 
$\square$

By condition (i) and Claim 3.A, if $G$ has a $p$-orientation, then for each vertex $v$
 the following condition automatically holds,
$$|d^+_G(v)-d_G(v)/2| \le k-1+|\alpha(v)|.$$
%--------------------------------------------------------------------
%
%\vspace{3mm}
%\\
{\bf Claim 4. For any two distinct vertices $x, y\in V (G) - z_0$, we have $\alpha (x)\alpha (y)> 0$.}

Suppose $\alpha(x)\alpha (y) \le 0$. By Claim 2, we may assume that $\alpha (x)> 0$ and $\alpha (y)< 0$.
By Claim~3, since $G - z_0$ is connected, we may also assume that $xy\in E(G-z_0)$.
%.
 Let
$G'= G -xy$, and take $p'=p-\chi_{x}$ to be the modified mapping.

Then $|V (G')|=|V (G)|$ and $|E(G'-z_0)| < |E(G - z_0)|$. 
If $G'$ and $p'$ satisfy the conditions of the theorem,
 then by the definition of $\mathcal{M}$,
 the pre-orientation can be extended to a $p'$-orientation of $G'$ 
and further to a $p$-orientation of $G$
by adding a directed edge from 
$x$ to $y$,
 yielding a contradiction.
Hence, it suffices to verify the conditions of the theorem for $G'$ and $p'$.
 Moreover, we only need
to verify 
condition (ii) 
for single vertices $x$ and $y$
 and vertex sets $A$ such that $|A| \ge 2$ and
$d_{G'}(A) = d_G(A) - 1$ which are affected by the deletion of $xy$.

Condition (ii) 
is satisfied for $x $ and $y$, since 
\begin{align*} 
d_{G'}(x)=& d_{G}(x)-1,&
p'(x)&=p(x)-1,& 
\alpha'(x)&=\alpha(x)-1/2, 
& |\alpha'(x)| &=|\alpha(x)|-1/2,\\
d_{G'}(y)=& d_{G}(y)-1,&
p'(y)&=p(y),& 
\alpha'(y)&=\alpha(y)+1/2, 
& |\alpha'(y)| &=|\alpha(y)|-1/2.
\end{align*}
 For any vertex set $A$ 
(in condition (ii)) 
such that 
$|A| \ge 2$ and $d_{G'}(A) = d_G(A) - 1$, 
we have 
$|\alpha' (A)|=|\alpha (A) \pm 1/2| \le |\alpha (A)|+ 1/2$ 
and by Claim 1, 
$$d_{G'}(A) = d_G(A) - 1\ge \big(2k +2|\alpha (A)|\big)- 1 \ge (2k - 2) +2|\alpha' (A)|.$$
Hence 
condition~(ii)
 is verified for $A$. So $\alpha (x)\alpha (y)>0$.
$\square$

Let $V^+ = \{x \in V (G) - z_0:0 < \alpha (x) < k/2 \}$
and
$V^-= \{x \in V (G) - z_0: -k/2< \alpha (x) <0\}$.

Note that if $k/2\stackrel{k}{\equiv} p(x)-d_G(x)/2$,
then $\alpha(x)$ has two possible
% $\alpha $-
values, namely $k/2$ and $-k/2$.
%--------------------------------------------------------------------
%
\vspace{3mm}
\\
{\bf Claim 5 . $V (G) -z_0 = V^+$ or $V(G) - z_0 = V^-$.}

By Claim 4, we have $V^+ =\emptyset$ or $V^- =\emptyset$. 
So it suffices to prove that $|\alpha (x)|< k/2$ for any vertex $x$
other than $z_0$.
If $x \in V (G) - z_0$ such that $|\alpha(x)|= k/2$, then for any vertex $y$ distinct from $x$ and $z_0$, we can choose
$\alpha(x) = k/2$ or $\alpha (x) =-k/2$ such that $\alpha (x)\alpha(y) \le 0$ and get a contradiction to Claim~4.
$\square$
%--------------------------------------------------------------------
%
\vspace{3mm}
\\
{\bf Part II. Minimum members of $\mathcal{M}$.}

Now choose $(G,p, z_0)$ to be a member of $\mathcal{M}$ such that $|E(G)|$ is minimum.
Without loss of generality, assume that $V (G) - z_0 = V^+$. For if $V (G) -z_0 = V^-$, we reverse the
directions of all edges incident with $z_0$ and replace $p(x)$ by $d_G(x) - p(x)$ for each vertex $x$ (including $z_0$).
%with $\alpha(x)\neq 0$.
 Then the resulting graph with the modified mapping satisfies $V (G) - z_0 = V^+$ and is
also a minimum member of $\mathcal{M}$.

For each vertex $x \in V (G) - z_0$,
\begin{equation*}
{d_G(x) \ge (2k - 2) + 2\alpha(x) \text{ and } 0 < \alpha(x) < k/2.
}\end{equation*}
%--------------------------------------------------------------------
%
%\vspace{3mm}
%\\
{\bf Claim 6. $d_G(z_0) = k + p(z_0)$, and all edges incident with $z_0$ are directed away from~$z_0$.}

By Claim 3, $z_0$ has a neighbour $x$. 
By Claim 5, $0 < \alpha (x) < k/2 $.
If $xz_0$ is directed toward $z_0$, 
then we delete $xz_0$. 
By a
proof similar to that of Claim 4, 
the resulting graph with modified 
mapping $p'=p-\chi_x$
 satisfies 
the conditions of
the theorem.
 Since $|V (G')|=|V (G)|$ and $|E(G')| < |E(G)|$,
 and $(G,p, z_0)$ is a smallest member of $\mathcal{M}$,
the pre-orientation can be extended to a $p'$-orientation of $G'$
 and then to 
a $p$-orientation 
of $G$ which contradicts the fact that $(G,p, z_0)$ is a counterexample.
So all edges incident with $z_0$ are directed away from $z_0$, and 
$d_G(z_0)=d^+_G(z_0) \stackrel{k}{\equiv} p(z_0)$.
Now, we can assume that 
$d^+_G(z_0) =d_G(z_0) = sk+p(z_0)$, where $s\ge 0$.
By 
condition (i),
 we have $d_G(z_0) \le (2k - 2) +2|\alpha(z_0)| \le 3k - 2$ and so $s\le 2$.
In the case $s=2$, we derive that
 $p(z_0)/2< |\alpha(z_0)|$.
Since
 $$p(z_0)\stackrel{k}{\equiv}\alpha(z_0)+d_G(z_0)/2\stackrel{k}{\equiv} \alpha(z_0)+(2k+p(z_0))/2 \stackrel{k}{\equiv} \alpha(z_0)+p(z_0)/2,$$
we also derive that 
$\alpha(z_0) \stackrel{k}{\equiv} p(z_0)/2$ which is a contradiction.
By Claim 3, we have $d_G(z_0) = k + p(z_0)$. $\square$
%--------------------------------------------------------------------
%
\vspace{3mm}
\\
{\bf The final step: $(G ,p, z_0 )$ is not a counterexample.}

By Claim 6, 
let $x$ be a neighbour of $z_0$, 
and let $e$ be an edge directed from $z_0$ to $x$. 
We replace $e$ by
$k - 1$ multiple directed edges from $x$ to $z_0$.
Let $G'$ be the resulting graph with $p'= p-\chi_x-\chi_{z_0}$. 
We are going to prove that $G'$ with the mapping $p'$ satisfies all conditions of the theorem and, 
furthermore, 
$-k/2 < \alpha'(x) <0$ for the vertex $x$.
By Claim 6,
$d_G(z_0) = k + p(z_0)$.
Since $p'(z_0) = p(z_0)-1$ and $d_{G'}(z_0) = d_G(z_0) + k - 2$, we have 
$$\alpha' (z_0) \stackrel{k}{\equiv} p'(z_0)- d_{G'}(z_0)/2 \stackrel{k}{\equiv} p(z_0)-1-(d_G(z_0) + k - 2)/2 \stackrel{k}{\equiv}p(z_0)/2.$$
This implies that $2|\alpha' (z_0)| = p(z_0)$ 
and therefore, $d_{G'}(z_0) =(2k - 2) +2|\alpha' (z_0)|$. 
So, 
condition (i)
 is satisfied for $G'$
 and $p'$.

For
 condition (ii),
 we only need to consider $x$ and vertex sets containing $x$. 
 Since $|\alpha (x)| \ge 1/2$, we have 
$$d_{G'}(x) = d_G(x) +(k - 2) \ge (2k - 2) +2|\alpha (x)|\,+ (k - 2) \ge 3k - 3,$$
and hence $d_{G'}(x)\ge (2k - 2) +2|\alpha' (x)|$.
In addition,
$$\alpha'(x)\stackrel{k}{\equiv} p'(x)-d_{G'}(x) /2\stackrel{k}{\equiv} p(x)-1 - \big(d_{G}(x) + (k - 2)\big)/2\stackrel{k}{\equiv} \alpha(x) - k/2.$$
Since 
 $0 < \alpha (x)< k/2 $, we have $\alpha' (x) = \alpha(x) - k/2$ and so $-k/2< \alpha' (x) < 0$.
By Claim~1, for any non-trivial vertex set $A$ of $G$
 described in
 condition (ii)
 and
containing $x$, we also have 
$$d_{G'}(A) = d_G(A) + k - 2 \ge 3k - 2 = (2k - 2) + k \ge (2k - 2) +2|\alpha' (A)|.$$
So, 
condition (ii) 
is also satisfied.

Now if $(G',p', z_0)$ is also a counterexample,
 then $(G',p', z_0)\in \mathcal{M}$, 
since $|V (G')|=|V (G)|$ and $|E(G' - z_0)|=|E(G - z_0)|$. 
But we have $V'^+ = V (G')- \{z_0, x\}$ and $V'^- =\{x\}$, a contradiction to Claim 5.
So $(G',p', z_0)$ is not a counterexample, and hence $G'$
 has a $p'$-orientation. 
Then the corresponding orientation of $G$ (obtained by replacing the $k - 1$ edges from $x$ to $z_0$ with one edge in opposite
direction) is a $p$-orientation of $G$ satisfying the theorem.
This completes the proof.
}\end{proof}
When $z_0$ does not have small enough degree, one can replace the following version of Theorem~\ref{Orientation:3k-3:thm:alpha}.
Note that by ignoring the extra condition on $z_0$, the proof can easily be obtained after adding an additional vertex of degree zero which plays the role of the vertex $z_0$ in Theorem~\ref{Orientation:3k-3:thm:alpha}.
\begin{cor}\label{Orientation:3k-3:cor:edge:stronger:z0:3k-3}
{Let $G$ be a
graph, let $k$ be an integer, $k\ge 3$, 
and let $p:V(G)\rightarrow Z_k$ be a mapping with
$|E(G)|\stackrel{k}{\equiv} \sum_{v\in V(G)}p(v)$. 
If for every vertex set $A$ with $\emptyset \subsetneq A \subsetneq V(G)$,
$d_G(A)\ge 2k-2+2|\alpha(A)|,$
then $G$ has a $p$-orientation such that for each vertex~$v$,
$$|d^+_G(v)-d_G(v)/2| \le k-1+|\alpha(v)|.$$
Furthermore, for an arbitrary vertex $z$, $d^+_{G}(z)$ 
can be assigned to any plausible integer value in whose interval.
}\end{cor}
\begin{proof}
{The proof is by induction on $|V(G)|+|E(G)|$.
For $|V(G)|\le 2$, the proof is trivial. Hence we may assume that $|V(G)|\ge 3$.
If $d_G(z)= 2k-2+2|\alpha(z)|$, then the proof can easily be derived from 
Theorem~\ref{Orientation:3k-3:thm:alpha}.
So, suppose $d_G(z)\ge 2k+2|\alpha(z)|$. 
We claim that for any vertex set $A\subsetneq V(G)\setminus \{z\}$ with $|A|\ge 2$, we have
 $d_G(A)\ge 2k+2|\alpha (A)|$
 and so $d_G(A^c)\ge 2k+2|\alpha (A^c)|$.
For, if $d_G(A)< 2k+2|\alpha (A)|$,
 then by a proof similar to that of
Claim~1, we apply induction to $G/A$ and then we apply Theorem~\ref{Orientation:3k-3:thm:alpha} to $G/A^c$. 
If $z$ has at least two neighbours, then we lift one non-parallel pair of edges incident with $z$.
Otherwise, if $z$ has only one neighbour $y$, we lift one parallel pair of edges incident with $z$ and $y$. In this case, we have
$$d_G(y)\ge d_G(z)+d_G( \{z,y\})\ge 
 2k+2k-2\ge\; 2k+2|\alpha(y)|.$$
By applying the induction hypothesis the proof can be completed.
}\end{proof}
\begin{cor}\label{Orientation:3k-3:cor:edge}
{Let $G$ be a graph, let $k$ be a positive integer, let $p:V(G)\rightarrow Z_k$ be a mapping with 
$|E(G)|\stackrel{k}{\equiv}\sum_{v\in V(G)}p(v)$.
If $G$ is $(3k-3)$-edge-connected, then it has a $p$-orientation such that for each vertex $v$,
$$\lfloor \frac{d_G(v)}{2}\rfloor - (k-1)\le\; d^+_G(v) \; \le \lceil \frac{d_G(v)}{2}\rceil+(k-1).
$$
Furthermore, for an arbitrary vertex $z$, $d^+_{G}(z)$
can be assigned to any plausible integer value in whose interval.
}\end{cor}
\begin{proof}
{The proof of $k=1$ is clear; note that the extra condition on $z$ can be obtained by reversing the orientation (if necessary). 
The proof of $k=2$ follows from Theorem~\ref{Orientation:modulo2:thm}. The proof of $k=3$ follows from Corollary~\ref{Orientation:3k-3:cor:edge:stronger:z0:3k-3}.
Note that the condition
 $|d^+_{G}(v)-d_{G}(v)/2|\le k-1+|\alpha(v)|$ directly
 implies that $|d^+_{G}(v)-d_{G}(v)/2|< k$. Equivalently,
$\lfloor \frac{d_{G}(v)}{2}\rfloor - (k-1)\le d^+_{G}(v) \le \lceil \frac{d_{G}(v)}{2}\rceil+(k-1)$, 
because 
$d^+_{G}(v)-d_{G}(v)/2\in \{\pm |\alpha(v)|, \pm (k-|\alpha(v)|)\}$ and also $ d^+_{G}(v)-d_{G}(v)/2=0$ when $\alpha(v)=0$.
}\end{proof}
%
%%==============================================================================

\subsection{Replacing odd-edge-connectivity condition}
\label{sec:odd-edg}
Motivated by Theorem~4.12 in~\cite{Lovasz-Thomassen-Wu-Zhang-2013}, we improve Theorem~\ref{Orientation:3k-3:thm:alpha} as the following strengthened version which discounts the condition $d_G(A)\ge 2k-2+2|\alpha(A)|$ for any vertex set $A$ with $\alpha(A)= 0$.
\begin{thm}\label{Orientation:3k-3:thm:edge:stronger:odd-edge}
{Let $G$ be a
graph with $z_0\in V(G)$, let $k$ be an integer, $k\ge 3$, and let $p:V(G)\rightarrow Z_k$ be a mapping with
$|E(G)|\stackrel{k}{\equiv} \sum_{v\in V(G)}p(v)$. 
Let $D_{z_0}$ 
be a pre-orientation 
of $E(z_0)$ that is the set of edges incident with $z_0$.
%Let $V_0=\{v\in V(G)-z_0: \alpha(v)=0\}$.
% If $V_0\neq \emptyset$, we let $v_0$ be a vertex of $V_0$ with smallest degree. 
 Assume that 
\begin{enumerate}{

\item [{\upshape$(i)$}]  %1
%, when $Z_0=\{z_0\}$.
 $\alpha(z_0)\neq 0$.

\item [{\upshape$(ii)$}]  %1
%$|V(G)|\ge 3$.
$d_G(z_0)\le 2k-2+2|\alpha(z_0)|$, and the edges incident with $z_0$ are pre-directed such that $d^+_G(z_0)\stackrel{k}{\equiv}p(z_0)$.
%$|d^+_G(Z_0)-d_G(Z_0)/2| \le k-1+|\alpha(Z_0)|$.

%\item [{\upshape$(iii)$}]  %1
%$d_G(z_0)\le d_G(v)$ for each vertex $v$ with $\alpha(v)=0$ and $v\neq v_0$, if $\alpha(z_0)=0$.

\item [{\upshape$(iii)$}]  %1
$d_G(A)\ge 2k-2+2|\alpha(A)|$, for any vertex set $A$ with $\emptyset \subsetneq A \subsetneq V(G)\setminus \{z_0\}$ and $\alpha(A)\neq 0$.
}\end{enumerate}

Then the pre-orientation $D_{z_0}$ can be extended to a $p$-orientation $D$ of $G$ such that for each vertex~$v$,
$$|d^+_G(v)-d_G(v)/2| \le k-1+|\alpha(v)|.
$$
}\end{thm}
\begin{proof}
{The proof is by induction on $|V(G)|+|E(G)|$.
 For $|V(G)|\le 3$, the assertion holds by Theorem~\ref{Orientation:3k-3:thm:alpha}.
 So, suppose $|V(G)|\ge 4$.
%Assume (reductio ad absurdum) that $G$ is a smallest counterexample.
We claim that for any vertex set $A\subsetneq V(G)\setminus \{z_0\}$ such that $\alpha(A)\neq 0$ and $|A|\ge 2$, we have
 $d_G(A)\ge 2k+2|\alpha (A)|$.
For, if $d_G(A)< 2k+2|\alpha (A)|$,
 then by a proof similar to that of
Claim~1, we apply induction to $G/A$ and then to $G/A^c$. 
Then by a proof similar to that of Claim~2, we claim that there is no vertex $v_0$ of $G$ such that $\alpha(v_0)=0$; 
for otherwise we either remove $v_0$ or lift one pair of edges incident with $v_0$, and next we apply induction.
Now $G$ must have a vertex set $A\subsetneq V(G)\setminus \{z_0\}$ such that $\alpha(A)=0$, $|A|\ge 2$, and $d_G(A)\le 2k-2$.
For otherwise
$G$ satisfies the conditions of Theorem~\ref{Orientation:3k-3:thm:alpha}, and Theorem~\ref{Orientation:3k-3:thm:edge:stronger:odd-edge} follows.
Choose $A$ with minimal $|A|$.
We contract $A$ and use induction. 
Then we contract $A^c$ and by the
minimality of $A$ we can apply
Theorem~\ref{Orientation:3k-3:thm:alpha} to the graph $G/A^c$.
}\end{proof}

When $z_0$ does not have small enough degree, one can replace the following version of Theorem~\ref{Orientation:3k-3:thm:edge:stronger:odd-edge}.
\begin{cor}\label{Orientation:3k-3:cor:odd-edge:z0with-large-degree}
{Let $G$ be a
graph, let $k$ be an integer, $k\ge 3$, and let $p:V(G)\rightarrow Z_k$ be a mapping with
$|E(G)|\stackrel{k}{\equiv} \sum_{v\in V(G)}p(v)$. 
If for every vertex set $A$ with $\alpha(A)\neq 0$,
$d_G(A)\ge 2k-2+2|\alpha(A)|,$
then $G$ has a $p$-orientation such that for each vertex~$v$,
$$|d^+_G(v)-d_G(v)/2| \le k-1+|\alpha(v)|.$$
Furthermore, for an arbitrary vertex $z$, $d^+_{G}(z)$ 
can be assigned to any plausible integer value in whose interval.
}\end{cor}
\begin{proof}
{By induction on $|V(G)|+|E(G)|$.
For $|V(G)|\le 2$, the proof is trivial. Hence we may assume that $|V(G)|\ge 3$.
Note that if $\alpha=0$, then the graph $G$ whose degrees are even and the theorem clearly holds.
Also if $\alpha(z)=0$, then we can take another vertex as $z$ without this property.
Hence we may assume that $\alpha(z)\neq 0$.
If $d_G(z)= 2k-2+2|\alpha(z)|$, then the conclusion trivially holds, using
Theorem~\ref{Orientation:3k-3:thm:edge:stronger:odd-edge}.
So, suppose $d_G(z)\ge 2k+2|\alpha(z)|$. 
We claim that for any vertex set $A\subsetneq V(G)\setminus \{z\}$ such that $\alpha(A)\neq 0$ and $|A|\ge 2$, we have
 $d_G(A)\ge 2k+2|\alpha (A)|$
 and so $d_G(A^c)\ge 2k+2|\alpha (A^c)|$.
For, if $d_G(A)< 2k+2|\alpha (A)|$,
 then by a proof similar to that of
Claim~1, we apply induction to $G/A$ and then we apply Theorem~\ref{Orientation:3k-3:thm:edge:stronger:odd-edge} to $G/A^c$. 
If $z$ has at least two neighbours, then we lift one non-parallel pair of edges incident with $z$.
Otherwise, if $z$ has only one neighbour $y$, we lift one parallel pair of edges incident with $z$ and $y$. In this case, we have
$$d_G(y)\ge d_G(z)+d_G( \{z,y\})\ge 
 \begin{cases}
2k+2|\alpha(z)|=\; 2k+2|\alpha(y)|, &\text{if $\alpha(\{z,y\})=0$};\\
 2k+2k-2\ge\; 2k+2|\alpha(y)|, &\text{if $\alpha(\{z,y\})\neq 0$}.
 \end{cases}$$
By applying the induction hypothesis the proof can be completed.
}\end{proof}
\begin{cor}\label{Orientation:3k-3:cor:odd-edge}
{Let $G$ be a graph and let $k$ be an odd positive integer.
If $G$ is odd-$(3k-2)$-edge-connected, then it has an orientation such that for each vertex~$v$,
$$d^+_G(v)\in \{\frac{d_G(v)}{2}-\frac{k}{2},\; \frac{d_G(v)}{2},\; \frac{d_G(v)}{2}+\frac{k}{2}\}.$$
}\end{cor}
\begin{proof}
{The proof of $k=1$ is clear. 
So, suppose  $k\ge 3$.
For each vertex $v$ with even degree, define $p=0$ (mod $k$), and define $p=d_G(v)/2+k/2$ (mod $k$) otherwise.
By Corollary~\ref{Orientation:3k-3:cor:odd-edge:z0with-large-degree}, the graph $G$ has a  $p$-orientation such that for each vertex $v$, $|d^+_G(v)-d_G(v)/2|<k$. This can complete the proof.
}\end{proof}
\begin{cor}\label{Orientation:3k-3:cor:Eulerian-oddset}
{Let $G$ be a graph with even degrees, let $k$ be a positive integer, and let $Q\subseteq V(G)$ with $|Q|$ even.
If for every $A\subseteq V(G)$ with $|A\cap Q|$ odd, $d_G(A)\ge 6k-2,$
then $G$ has an orientation such that for each vertex $v$,
 $$d_G^+(v) \in \begin{cases}
\{\frac{d_G(v)}{2}\},	&\text{when $v\not \in Q$};\\
\{\frac{d_G(v)}{2}-k, \frac{d_G(v)}{2}+k\},	&\text{when $v\in Q$}.
\end {cases}$$
}\end{cor}
\begin{proof}
{For each $v\in Q$, define $p=d_G(v)/2+k$ (mod $2k$), and for each $v\in V(G)\setminus Q$, define $p=d_G(v)/2$ (mod $2k$).
It is easy to check that for every vertex set $A$, we have $\alpha_G(A,p)=0$ when $|A\cap Q|$ is even, and 
$|\alpha_G(A, p)|=k$ when $|A\cap Q|$ is odd.
Thus by Corollary~\ref{Orientation:3k-3:cor:odd-edge:z0with-large-degree}, the graph $G$ has a $p$-orientation modulo $2k$. 
For the special case $k=1$, we can replace the condition $d_G(A)\ge 2$ for every vertex set $A$ with $|A\cap Q|$ odd. 
For this purpose, we need to apply Theorem~\ref{Orientation:modulo2:thm} to each component of $G$ separately.
}\end{proof}
%
%
%==============================================================================
\subsection{A new presentation: a lower bound independent of degrees of vertices}
Our aim in this subsection is to introduce the following equivalent version of Theorem~\ref{Orientation:3k-3:thm:alpha} which is useful for working with essentially edge-connected graphs.
This version can directly be proved using the same arguments stated in the proof of 
Theorem~\ref{Orientation:3k-3:thm:alpha}. 
However, some parts in the proof would be shorter to state,
some parts need more extra efforts. 
\begin{thm}\label{thm:new-lower-bound}
{Let $G$ be a
graph with $z_0\in V(G)$, let $k$ be an integer, $k\ge 3$, and let $p:V(G)\rightarrow Z_k$ be a mapping with
$|E(G)|\stackrel{k}{\equiv} \sum_{v\in V(G)}p(v)$. Let $D_{z_0}$ 
be a pre-orientation 
of $E(z_0)$ that is the set of edges incident with $z_0$.
Let $V_0=\{v\in V(G)-z_0: p(v)\stackrel{k}{\equiv}d_G(v)/2\}$.
 If $V_0\neq \emptyset$, we let $v_0$ be a vertex of $V_0$ with smallest degree. Assume that 
\begin{enumerate}{

\item [{\upshape$(i)$}]  %1
 $d_G(z_0)\le 2k-1+p(z_0)$, and the edges incident with $z_0$ are pre-directed such that $d^+_G(z_0)\stackrel{k}{\equiv}p(z_0)$.

\item [{\upshape$(ii)$}]  %1
$d_G(A)\ge 2k-1+[p(A)]_k$, for any vertex set $A$ with $\emptyset \subsetneq A \subsetneq V(G)\setminus \{z_0\}$ and $A\neq \{v_0\}$.
}\end{enumerate}

Then the pre-orientation $D_{z_0}$ can be extended to a $p$-orientation $D$ of $G$ such that for each vertex~$v$,
$$|d^+_G(v)-d_G(v)/2| <k.$$
}\end{thm}
\begin{cor}\label{Orientation:3k-3:cor:essentially:edge}
{Let $G$ be a graph, let $k$ be an integer, $k\ge 3$, let $p:V(G)\rightarrow Z_k$ be a mapping with 
$|E(G)|\stackrel{k}{\equiv}\sum_{v\in V(G)}p(v)$.
If $G$ is essentially $(3k-3)$-edge-connected and for each vertex $v$, $d_G(v)\ge 2k-1+[p(v)]_k$, then $G$ has a $p$-orientation such that for each vertex $v$,
$$\lfloor \frac{d_G(v)}{2}\rfloor - (k-1)\le\; d^+_G(v) \; \le \lceil \frac{d_G(v)}{2}\rceil+(k-1).
$$
Furthermore, for an arbitrary vertex $z$, $d^+_{G}(z)$
can be assigned to any plausible integer value in whose interval.
}\end{cor}
\begin{cor}\label{cor:new-presentation:3k-3}
{Let $G$ be a graph, let $k$ be an integer, $k\ge 3$, and let $p:V(G)\rightarrow Z_k$ be a mapping with
$|E(G)| \stackrel{k}{\equiv} \sum_{v\in V(G)}p(v)$ such that for each vertex $v$, $p(v)=0$ or $p(v)\stackrel{k}{\equiv}d_G(v)$.
If $G$ is $(2k-1)$-edge-connected essentially $(3k-3)$-edge-connected, then it has a $p$-orientation.
}\end{cor}
The following proposition gives some useful relationship between the set functions $\alpha(A) $ and $[p(A)]_k$, which
is a useful tool to show that why Theorems~\ref{Orientation:3k-3:thm:alpha} and~\ref{thm:new-lower-bound} are equivalent.
\begin{proposition}\label{prop:new-presentation}
{Let $k$ be a positive integer and let $d$ and $p$ be two nonnegative integers with $p\in Z_k$.
Take $\alpha$ to be a rational number such that
$\alpha\in \{0,\pm 1/2,\ldots,\pm k/2\}$ and
$\alpha\stackrel{k}{\equiv}p-d/2$.
Assume that $2k-2\le d \le 3k-2$.
 Then following statements hold:
\begin{enumerate}{
\item \label{new-con:0} 
$d\le 2k-2+2|\alpha|$  if and only if $d\le 2k-1+p$.

\item \label{new-con:3} 
$d= 2k-2+2|\alpha|$ if and only if $d= 2k-1+p$ or $p=k-1$.

\item \label{new-con:1} 
$d\ge 2k-2+2|\alpha|$ if and only if $d\ge 2k-1+[p]_k$ .

\item \label{new-con:2} 
 $d\ge 2k-1+[p]_k$ if and only if $d\ge 2k-1+[d-p]_k$.
}\end{enumerate}
}\end{proposition}
\begin{proof}
{Let $a$ be the unique integer in $\{-1,\ldots, k-1\}$ such that $d=2k-1+a$.
It is not difficult to show that
 $$ \begin{cases}
2|\alpha|-1<a,	&\text{when $p<a$};\\
2|\alpha|-1=a,	&\text{when $p=a$ or $p=k-1$};\\
2|\alpha|-1>a,	&\text{when $a< p<k-1$}.
\end {cases}$$
This can confirm items (1), (2), and (3).
Now, assume that  $d\ge 2k-1+[p]_k$ so that $a\ge [p]_k$.
Thus $0\le p\le a$ or $p=k-1$ which implies that $a\ge [a-1-p]_k =[d-p]_k$.
This can complete the proof.
}\end{proof}
%
%==============================================================================
%
%
%
%
%
%
%
%%==============================================================================
%
\section{Orientations modulo $k$: partition-connected versions}
\label{sec:partition-connected}
In this section, we improve the needed edge-connectivity in Corollary~\ref{Orientation:3k-3:cor:edge}, but require the graph to have
 many edge-disjoint spanning trees.
%
%
%==============================================================================
%
%
%
%
%
%
%
%
%==============================================================================
%
%
\subsection{Basic tools: Lifting operations preserving partition-connectivity}
In this subsection, we present a sufficient condition for the existence of lifting operations which preserves
 tree-connectivity.
\begin{thm}\label{thm:preserving}
{Let $G$ be a general graph with $z\in V(G)$ and let $l_0$ be a nonnegative integer-valued function on $V(G)$.
Assume that $z$ is not incident with loops.
If $G$ contains an $(m, l_0)$-partition-connected factor $H$ with $d_G(z)\ge 2d_H(z)-2l_0(z)-2m$, then there are 
$d_H(z)-l_0(z)-m$ pair of edges incident with $z$ such that by lifting them the resulting general graph $G_0$ with
 $V(G_0)=V(G)\setminus \{z\}$ is still $(m,l_0)$-partition-connected.
}\end{thm}
\begin{proof}
{First assume that $m=0$. 
By the assumption, the graph $H$ has an orientation such for each vertex $v$, $d^+_H(v)\ge l_0(v)$.
Let $u_1z,\ldots, u_nz$ be the edges of $H$ incident with $z$ directed toward $z$. 
Since $d_G(z)\ge 2(d_H(z)-l_0(z))\ge 2d^-_H(z)$, there are
at least $n$ edges $v_1z,\ldots, v_nz$ of $E(G)\setminus \{u_1z,\ldots, u_nz\}$ incident with $z$.
Define $G_0$ to be the directed general graph with $V(G_0)=V(G)-z$ consisting of all directed edges of $G-z$ along with the new directed edges $u_1v_1,\ldots, v_nu_n$ which $u_iv_i$ is directed from $u_i$ to $v_i$. 
It is easy to check that $G_0$ is $(0,l_0)$-partition-connected.

Now, assume that $m= 1$ and $l_0=0$. 
We prove this case by induction on the number of components of $H-z$.
Let $C_1,\ldots, C_n$ be the components of $H-z$ and let 
 $u_1z,\ldots, u_nz$ be $n$ edges of $H$ incident with $z$ such that $u_i\in V(C_i)$.
If $n=1$, then $G-z$ is connected and the proof is clear.
If $n=2$, define $G_0$ to be the graph with $V(G_0)=V(G)-z$ consisting of all edges of $G-z$ along with the new edge
$u_1u_2$. It is easy to check that $G_0$ is connected.
So, suppose $n\ge 3$.
Since $d_G(z)> d_H(z)$, there is an edge $v_1z$ of $E(G)\setminus E(H)$ incident with $z$.
We may assume that $v_1\not \in V(C_1)$.
Define $G'$ and $H'$ to be the graphs obtained from $G$ and $H$ by removing the edges $u_1z$ and $v_1z$ and adding the new edge $u_1v_1$.
 It is easy to check that $H'$ is still connected and $H'-z$ has $n-1$ components. 
Moreover, we have $d_{G'}(z)=d_G(z)-2$ and $d_{H'}(z)=d_H(z)-1$, which implies that $d_{G'}(z)\ge 2d_{H'}(z)-2$.
Now, by applying induction to the graphs $G'$ and $H'$ the proof of this part can easily be completed. 

Now, we are going to prove the remaining cases by induction on $m$.
Suppose $m\ge 1$.
Set $m_1=1$, $l_1=0$, $m_2=m-1$, and $l_2=l_0$.
Decompose $H$ into two factors $H_1$ and $H_2$ such that each $H_i$ is $(l_i+m_i)$-partition-connected.
Also, decompose $G$ into two factors $G_1$ and $G_2$ such that each $G_i$ contains $H_i$.
Since $d_G(z)\ge 2d_H(z)-2l_0(z)-2m$, we must have $d_{G_i}(z)\ge 2d_{H_i}(z)-2l_i(z)-2m_i$ for at least an integer $i\in \{1,2\}$.
Let $j\in \{1,2\}$ with $j\neq i$.
 By induction hypothesis, there are $d_{H_i}(z)-l_i(z)-m_i$ pair of edges of $G_i$ incident with $z$ such that by lifting them the resulting general graph $G'_i$ with $V(G'_i)=V(G_i)\setminus \{z\}$ is still $(m_i, l_i)$-partition-connected.
Define $\mathcal{G}_j$ to be the factor of $G$ consisting of all edges of $G_j$ together with all remaining edges of $G_i$ incident with $z$ which are not lifted.
According to the construction of $\mathcal{G}_j$, we must have 
$d_{\mathcal{G}_j}(z)= d_G(z)-2 (d_{H_i}(z)-l_i(z)-m_i)\ge 2d_{H_j}(z)-2l_j(z)-2m_j$.
By induction hypothesis, there are $d_{H_j}(z)-l_j(z)-m_j$ pair of edges of $\mathcal{G}_j$ incident with $z$ such that by lifting them the resulting general graph $G'_j$ with $V(G'_j)=V(\mathcal{G}_j)\setminus \{z\}$ is still $(m_j, l_j)$-partition-connected.
It is enough, now, to define $G_0=G'_1\cup G'_2$. 
Hence the theorem is proved.
}\end{proof}
\begin{cor}{\rm (\cite{Fekete-Szego-2007})}\label{PreservingTree-Connectivity:cor}
{Let $G$ be an $m$-tree-connected graph with $z\in V(G)$. If $d_G(z)\le 2m$, 
then there are 
$d_G(z)-m$
non-parallel pair of edges incident with $z$ such that by lifting them the resulting graph $G_0$ with $V(G_0)=V(G)\setminus \{z\}$ is still $m$-tree-connected.
}\end{cor}

\subsection{Graphs with partition-connectivity at least $(2k-2,l_0)$}
In this subsection, we improve the needed edge-connectivity in Corollary~\ref{Orientation:3k-3:cor:edge}, but require the graph to have
 many edge-disjoint spanning trees.
\begin{thm}\label{Tree:thm:(k-1)-bound}
{Let $G$ be a general graph with $z\in V(G)$, let $k$ be an integer, $k\ge 3$, and 
 let $p:V(G)\rightarrow Z_k$ be a mapping with $|E(G)|\stackrel{k}{\equiv}\sum_{v\in V(G)}p(v)$.
Let $s$, $s_0$, and $l_0$ be three integer-valued functions on $V(G)$ satisfying $s+s_0+k-1\le d_G$ and
$\max\{s,s_0\}\le l_0+(k-1)(1-\chi_z)$. 
If $G$ is $(2k-2, l_0)$-partition-connected, then it has a $p$-orientation such that for each vertex $v$, 
$$s(v)\le d_{G}^+(v)\le d_G(v)-s_0(v).$$
}\end{thm}
\begin{proof}
{We may assume that $l_0$ is nonnegative and $G$ is loopless. 
The proof is by induction on $|V(G)|$.
For $|V(G)|\le 2$ the proof is straightforward.
So, suppose $|V(G)|\ge 3$.
For notational simplicity, let us define $m=2k-2$.
For proving the theorem, we shall consider the following four cases.
%======= Case 1
%
\vspace{3mm}
\\
{\bf Cases 1. There is a vertex $u\in V(G)\setminus \{z\}$ with $d_G(u)=2l_0(u)+2m-r$ such that 
$0\le r\le l_0(u)+m$ and 
 $p(u)\stackrel{k}{\equiv}l_0(u)+m-i$ where $0 \le i \le \min \{r,k-1\}$.}

By Theorem~\ref{thm:preserving},
there are $l_0(u)+m-r$ pair of edges incident with $u$ 
such that by lifting them the resulting general graph $H$ with $V(H)=V(G)\setminus \{u\}$
is still $(m, l_0)$-partition-connected.
Obviously, $d_R(u)=d_G(u)-2(l_0(u)+m-r)=r$,
where $R$ is the factor of $G$ consisting of all edges incident with $u$ that are not lifted.
Since $i\le r$, the edges of $R$ can be orientated
such that $ d^+_R(u) \stackrel{k}{\equiv}r-i$. 
Define $s'(u)=s(u)-(l_0(u)+m-r)$ and $s'_0(u)=s_0(u)-(l_0(u)+m-r)$.
By the assumption, we must have $\max\{s'(u),s'_0(u)\}\le d_R(u)-(k-1)$,
 and $s'(u)+s'_0(u)\le d_R(u)-(k-1)$.
Therefore, if $d_R(u)\ge k-1$ then the orientation of $R$ can be selected such that
$s'(u)\le d^+_R(u)\le d_R(u)-s'_0(u)$.
If $d_R(u)\le k-1$, then $\max\{s(v),s_0(v)\}\le 0$ and so we must automatically have
$$s'(u)\le 0\le d^+_R(u)\le d_R(u) \le d_R(u)-s'_0(u).$$
Now, for each vertex $v$ of $H$, define $p'(v)=p(v)-d^+_R(v)$.
It is easy to check that $|E(H)|\stackrel{k}{\equiv}\sum_{v\in V(H)}p'(v)$.
Obviously, $\max\{s(v)-d_{R}^+(v), s_0(v)-d_{R}^-(v) \}\le \max\{s(v),s_0(v)\}\le l_0(v)+k-1$ and 
$s(v)-d_{R}^+(v)+s_0(v)-d_{R}^-(v) +k-1 \le d_{H}(v)$. 
Thus by the induction hypothesis, 
$H$ has a $p'$-orientation modulo $k$ such that for each $v\in V(H)$, 
$$s(v)-d_{R}^+(v)\le d_{H}^+(v)\le d_{H}(v)-(s_0(v)-d_{R}^-(v))=d_{G}(v)-s_0(v)-d^+_{R}(v).$$
This orientation induces a $p$-orientation for $G$ such that for each $v\in V(H)$, 
 $d^+_G(v)= d^+_{H}(v)+d^+_R(v)$, and also $d_{G}^+(u)= d^+_R(u)+l_0(u)+m-r.$
This can complete the proof of Case 1. $\square$
%
%
%======= Case 2
%
\vspace{3mm}
\\
{\bf Case 2. $d_G(z)<2l_0(z)+k-1$.}

Since $d_G(z)\ge 2k-2$, we must have $l_0(z)>0$ and hence there is an edge $zu$ incident with $z$
 such that the graph $G_0$ is $(m, l_0-\chi_z)$-partition-connected, where $G_0=G-zu$. 

First assume that $s(z)<l_0(z)$ which implies that $s(z)\le l_0(z)-\chi_z(z)$.
Thus by the induction hypothesis, the graph $G_0$ has a $(p-\chi_u)$-orientation such that for each vertex $v$,
$s(v)-\chi_u(v)\le d_{G_0}^+(v)\le d_{G_0}(v)-(s_0(v)-\chi_z(v))$.
Now, this orientation induces the desired $p$-orientation for $G$ by adding an edge directed from $u$ to $z$.

Now, assume that $s(z)=l_0(z)$.
This implies that $s_0(z)<l_0(z)$, because $s(z)+s_0(z)+k-1\le d_G(z)<2l_0(z)+k-1$.
Thus by the induction hypothesis, the graph $G_0$ has a $(p-\chi_z)$-orientation such that for each vertex $v$,
$s(v)-\chi_z(v)\le d_{G_0}^+(v)\le d_{G_0}(v)-(s_0(v)-\chi_u(v))$.
Now, this orientation induces the desired $p$-orientation for $G$ by adding an edge directed from $z$ to $u$.
This completes the proof of Case 2. $\square$
%
%======= Case 3
%
\vspace{3mm}
\\
{\bf Case 3. There is a nonempty proper subset $A$ of $V(G)\setminus \{z\}$ such that 
 $d_G(A) < 2k-2+2|\alpha(A)|$.}

By the first case, we must have $|A|\ge 2$, because $d_G(v)\ge 2l_0(v)+3k-3$ for each $v\in V(G)\setminus \{z\}$.
Choose $A$ with minimal $|A|$.
We contract $A$ and use induction.
Note that $G/A$ is also $(m, l_0)$-partition-connected, where $l_0(A)=0$.
Then we contract $A^c$ and by the
minimality of $A$, we can apply
Theorem~\ref{Orientation:3k-3:thm:alpha} to the graph $G/A^c$.
 For verifying the condition on out-degrees of vertices of $A$, we can apply the same arguments stated in the next case.
$\square$
%
%======= Case 4
%
\vspace{3mm}
\\
{\bf Case 4: For every nonempty proper subset $A$ of $V(G)\setminus \{z\}$, $d_G(A) \ge 2k-2+2|\alpha(A)|$.}

By applying Theorem~\ref{Orientation:3k-3:thm:alpha} or Corollary~\ref{Orientation:3k-3:cor:edge:stronger:z0:3k-3} (with respect to the case that $d_G(z)\le 2k-2+2|\alpha(z)|$ or not), 
the graph $G$ has a $p$-orientation 
such that $|d^+_G(z)-d_G(z)/2|\le k/2$ and 
$|d^+_G(v)-d_G(v)/2|<k$ for all vertices~$v$.
According to Case 2, $d_G(z)\ge 2l_0(z)+k-1$
which implies that $s_0(z)\le l_0(z)\le d^+_G(z) \le d_G(z)-l_0(z)\le d_G(z)-s_0(z)$.
Let $v\in V(G)\setminus \{z\}$.
If $d_G(v)\ge 2l_0(v)+2m$, then we must automatically have 
$ s(v)\le l_0(v)+k-1\le d^+_G(v) \le d_G(v)-l_0(v)-(k-1)\le d_G(v)-s_0(v).$
Otherwise, $d_G(v)=2l_0(v)+2m-r$ in which $0< r< k-1$ and 
$ l_0(v)+m-k- r/2< d^+_G(v) < l_0(v)+m+k- r/2$.
According to Case 1, 
$$ d^+_G(v)\notin \{l_0(v)+m-k-i: 0\le i \le r\}\, \cup\, \{l_0(v)+m+k-i:0\le i \le r\},$$
which again implies that
$s(v)\le  l_0(v)+m-(k-1)\le d^+_G(v) \le d_G(v)-l_0(v)-m+(k-1)\le d_G(v)-s_0(v).$
Hence the proof is completed.
}\end{proof}
In the following theorem, we shall restate a simpler version of Theorem~\ref{Tree:thm:(k-1)-bound}
 which is refined by involving extension of pre-orientations.
\begin{thm}\label{thm:pre-orientation:tree-connected}
{Let $G$ be a non-trivial graph, let $k$ be an integer, $k\ge 3$, and let $p:V(G)\rightarrow Z_k$ be a mapping with  $|E(G)|\stackrel{k}{\equiv}\sum_{v\in V(G)}p(v)$. If $G$ is $(2k-2)$-tree-connected, then it has a $p$-orientation such that for each vertex~$v$, $$k/2-1\le d^+_G(v)\le d_G(v)-k/2+1.$$
}\end{thm}
\begin{proof}
{The proof can be obtained by induction on $|V(G)|$ as the arguments stated in the proof Theorem~\ref{Tree:thm:(k-1)-bound}.
For proving the assertion, one can consider the following three cases:  Case (i) there is a vertex $u$ with $d_G(u)\le  3k-3$. In this case, we should apply Theorem~\ref{PreservingTree-Connectivity:cor}. Case (ii) there is a vertex set $A$ satisfying $2\le |A|\le |V(G)|-2$ and $d_G(A)\le  2k-2+2|\alpha(A)|$. In this case,  we choose $A$ with minimal $|A|$. We contract $A$ and use induction. Note that $G/A$ is also $(2k-2)$-tree-connected. Then we contract $A^c$ and by the minimality of $A$, we can apply Theorem~\ref{Orientation:3k-3:thm:alpha} to the graph $G/A^c$. Case (iii)  fore every nonempty proper subset $A$ of $V(G)$, $d_G(A)\ge   2k-2+2|\alpha(A)|$.  In this case, we should apply Corollary~\ref{Orientation:3k-3:cor:edge:stronger:z0:3k-3}.
}\end{proof}
\begin{cor}\label{cor:positive-orientation}
{Let $G$ be a non-trivial graph, let $k$ be an integer, $k\ge 3$, and 
 let $p:V(G)\rightarrow Z_k$ be a mapping with 
$|E(G)|\stackrel{k}{\equiv}\sum_{v\in V(G)}p(v)$.
If $G$ is $(2k-2)$-tree-connected, then it has a $p$-orientation such that for each vertex $v$,
$0< d^+_G(v)< d_G(v).$
}\end{cor}
\begin{proof}
{Apply Theorem~\ref{thm:pre-orientation:tree-connected}.
}\end{proof}
%
%
%
%==============================================================================
%
%
%
%
%
%
%==============================================================================
%
\subsection{A generalization: definition of $\lambda_k$}
In this subsection, we shall define a parameter $\lambda_k$ and give an application of it on tree-connected graphs.
For this purpose, we need to form the following consequence of Theorem~\ref{Orientation:3k-3:thm:alpha} to show that the definition is well-defined by giving an upper bound on it. The following fact can be considered as an extension of Lemma 2.2 (i) in~\cite{Han-Lai-Li-2018}.
\begin{thm}
{Let $G$ be a graph with $z_0\in V(G)$, let $k$ be an integer, $k\ge 3$, and 
 let $p:V(G)\rightarrow Z_k$ be a mapping with 
$|E(G)|\stackrel{k}{\equiv}\sum_{v\in V(G)}p(v)$.
Let $D$ be a given pre-orientation of the edges incident with $z_0$ such that $d^+_{D}(z_0) \stackrel{k}{\equiv}p(z_0)$. 
If $d_G(z_0)\le 3k-2$ and 
 $d_G(A)\ge 3k-3$ for every nonempty proper subset $A$ of $V(G)\setminus \{z_0\}$,
then pre-orientation of $D$ can be extended a $p$-orientation of
 $G$.
}\end{thm}
\begin{proof}
{Let $D'$ be another pre-orientation of the edges incident with $z_0$ such that $d^+_{D'}(z_0)\stackrel{k}{\equiv}k-1$.
For each vertex $v$, we define $p'(v)=p(v)-d^+_{D}(v)+d^+_{D'}(v)$ (mod $k$).
Since $\sum_{v\in V(G)}d^+_{D}(v)\stackrel{k}{\equiv} d_G(z_0)\stackrel{k}{\equiv}\sum_{v\in V(G)}d^+_{D'}(v) $, 
it is easy to check that $|E(G)|\stackrel{k}{\equiv}\sum_{v\in V(G)}p(v)\stackrel{k}{\equiv}\sum_{v\in V(G)}p'(v)$.
Since $d_G(z_0)\le 3k-2=2k-1+p'(z_0)$, by Theorem~\ref{thm:new-lower-bound}, 
the pre-orientation of $D'$ can be extended to a $p'$-orientation modulo $k$ of $G$. 
Now, it is enough to replace the orientation of the edges of $D$ in the current orientation of $G$ to obtain the desired $p$-orientation.
}\end{proof}
For any positive integer $k$ with $k\ge 3$, we define $\lambda_k$ to be the smallest positive integer such that the following holds:
If $G$ is a graph with $z_0\in V(G)$ satisfying $d_G(z_0)< \lambda_k$ and 
 $d_G(A)\ge \lambda_k$ for every nonempty proper subset $A$ of $V(G)\setminus \{z_0\}$,
then any suitable pre-orientation of the edges incident with $z_0$ can be extended a $p$-orientation, where $p:V(G)\rightarrow Z_k$ is a given arbitrary mapping with
$|E(G)|\stackrel{k}{\equiv}\sum_{v\in V(G)}p(v)$.
According the definition of $\lambda_k$, one can now formulate the following result on modulo orientation of tree-connected graphs.
Note that for defining $\lambda_k$, we could also restrict our attention to $\lceil (\lambda_k+k-2)/2\rceil$-tree-connected graphs to deduce the following result alternatively.
\begin{thm}\label{Tree:thm:main}
{Let $G$ be a graph, let $k$ be an integer, $k\ge 3$, and 
 let $p:V(G)\rightarrow Z_k$ be a mapping with 
$|E(G)|\stackrel{k}{\equiv}\sum_{v\in V(G)}p(v)$.
If $G$ is $ \lceil (\lambda_k+k-2)/2\rceil$-tree-connected, then it has a $p$-orientation.
}\end{thm}
\begin{proof}
{Let $m= \lceil (\lambda_k+k-2)/2\rceil$. 
For proving the theorem, we consider the following three cases:
Case (i) there is a vertex $u$ with $d_G(u)< \lambda_k$.
In this case, by Corollary~\ref{PreservingTree-Connectivity:cor},
 there are 
$d_G(u)-m$
non-parallel pair of edges incident with $u$ such that by lifting them the resulting graph $G_0$ with $V(G_0)=V(G)\setminus \{u\}$ is still $m$-tree-connected.
Thus $2m-d_G(u)$ edges are not lifted.
Since $d_G(u)\le \lambda_k-1$, we must have $2m-d_G(u)\ge 2m -\lambda_k+1\ge k-1$.
Let $M$ be the factor of $G$ consisting of the edges incident with $u$ that are not lifted.
Since $|E(M)|\ge k-1$, the edges of $M$ can be directed such that $d^+_M(u)+(d_G(u)-d_M(u))/2\stackrel{k}{\equiv}p(u)$.
Now, by applying the induction hypothesis on $G_0$, the orientation of $M$ can be extended to a $p$-orientation of $G$.
Case (ii) there is a vertex set $A$ with $2\le |A|\le |V(G)|-2$ and $d_G(A)< \lambda_k$.
Choose $A$ with minimal $|A|$.
We contract $A$ and use induction.
Then we contract $A^c$ and by minimality of $A$ and the definition of $\lambda_k$, 
the pre-orientation can be extended to a $p$-orientation of $G$.
Note that $G/A$ is also $m$-tree-connected.
Case (iii) $G$ is $\lambda_k$-edge-connected. This case also follows from the definition of $\lambda_k$ by adding an artificial vertex $z_0$.
}\end{proof}
In 2012 Bar{\'a}t, Gerbner, and Thomass{\'e} proposed a conjecture on star-decomposition of simple graphs 
which can be reformulated to the following modulo orientation version, see~\cite{ManyEdgeDisjoint}. Recently, Han, Li, Wu, and Zhang~\cite{Han-Li-Wu-Zhang-2018} showed that the following conjecture cannot be developed to the class of $(k-1)$-tree-connected $(2k-1)$-edge-connected graphs, when $k\ge 11$.
By the above-mentioned theorem, one can confirm this conjecture for $ \lceil (3k-2)/2\rceil$-tree-connected graphs,
if $\lambda_k \le 2k$.
\begin{conj}{\rm(\cite{Barat-Gerbner-2014})}
{Let $G$ be a graph, let $k$ be an integer, $k\ge 3$, and 
 let $p:V(G)\rightarrow Z_k$ be a mapping with 
$|E(G)|\stackrel{k}{\equiv}\sum_{v\in V(G)}p(v)$.
If $G$ is $k$-tree-connected, then it has a $p$-orientation.
}\end{conj}
\begin{remark}
{Recently, Esperet, de Joannis de Verclos, Le, and Thomass{\'e} (2018)~\cite{Esperet-Verclos-Le-Thomasse-2018} 
 utilized Theorems~\ref{Into:thm:3k-2} to establish a result on additive bases and a result on weighted orientations.
By reviewing their proofs, we find out one can replace Theorem~\ref{Tree:thm:(k-1)-bound} in their proofs to get further improvements.
}\end{remark}
%
%==============================================================================
%
%
%
%
%
%
%
%
%
%
%								Acknowledgement
%==============================================================================	
%

%\bibliographystyle{siam}
%\bibliography{ref}

\begin{thebibliography}{10}

\bibitem{Barat-Gerbner-2014}
 J.~Bar{\'a}t and D.~Gerbner, 
 Edge-decomposition of graphs into copies of a tree with four edges, Electron. J. Combin. 21 (2014),~Paper 1.55, 11.

\bibitem{Esperet-Verclos-Le-Thomasse-2018}
 L.~Esperet, R.~de~Joannis~de Verclos, T.-N. Le, and S.~Thomass\'{e}, 
 Additive bases and flows in graphs, SIAM J. Discrete Math. 32 (2018)~534--542.


\bibitem{Fekete-Szego-2007}
 Z.~Fekete and L.~Szeg\H o, 
 A note on {$[k,l]$}-sparse graphs, in Graph theory in {P}aris, Trends Math., Birkh\"auser, Basel, 2007, pp.~169--177.


\bibitem{Frank-Tardos-Sebo-1984}
A. Frank, {\'E}. Tardos, and A.Seb{\H{o}}, Covering directed and odd cuts, Math. Program. Stud. 22 (1984)~99--112.

\bibitem{Han-Li-Wu-Zhang-2018}
 M.~Han, J.~Li, Y.~Wu, and C.-Q. Zhang, 
 Counterexamples to {J}aeger's {C}ircular {F}low {C}onjecture, J. Combin. Theory Ser. B 131 (2018)~1--11.

\bibitem{Han-Lai-Li-2018}
M.~Han, H-J.~Lai, and J. Li, 
 Nowhere-zero 3-flow and {$\Bbb Z_3$}-connectedness in graphs with four edge-disjoint spanning trees,
J. Graph Theory 88 (2018)~577--591.

\bibitem{Equitable}
M.~Hasanvand, 
Equitable factorizations of edge-connected graphs, arXiv:1906.04325v3.

\bibitem{ManyEdgeDisjoint}
M.~Hasanvand, Tutte's 3-Flow Conjecture in $3$-tree-connected graphs, arXiv:1611.02231v2.

\bibitem{Lovasz-Thomassen-Wu-Zhang-2013}
 L.M. Lov{\'a}sz, C.~Thomassen, Y.~Wu, and C.-Q. Zhang, 
 Nowhere-zero 3-flows and modulo {$k$}-orientations, J. Combin. Theory Ser. B 103 (2013)~587--598.

\bibitem{Mader-1978}
 W.~Mader, A reduction method for edge-connectivity in graphs, Ann. Discrete Math. 3 (1978)~145--164.
%\newblock Advances in graph theory (Cambridge Combinatorial Conf., Trinity
% College, Cambridge, 1977).


\bibitem{Nash-Williams-1961}
 C.St.J.A. Nash-Williams, Edge-disjoint spanning trees of finite graphs, J. London Math. Soc. 36 (1961)~445--450.

\bibitem{MR2885433}
 C.~Thomassen, The weak 3-flow conjecture and the weak circular flow conjecture, J. Combin. Theory Ser. B 102 (2012)~521--529.

\bibitem{Thomassen-2020}
 C.~Thomassen, Factorizing regular graphs, J. Combin. Theory Ser. B 141 (2020) 343--351.


\bibitem{Tutte-1961}
W.T. Tutte, On the problem of decomposing a graph into {$n$} connected factors, J. London Math. Soc. 36 (1961)~221--230.

\end{thebibliography}
\end{document}